\newcommand{\set}[1]{\left\{#1\right\}}
\newcommand{\abs}[1]{\left|#1\right|}
\newcommand{\p}{\partial}
\newcommand{\mU}{\mathbf{U}}
\newcommand{\mf}{\mathbf{f}}
\newcommand{\mx}{\mathbf{x}}
\newcommand{\my}{\mathbf{y}}
\newcommand{\mz}{\mathbf{z}}
\newcommand{\vt}{\boldsymbol{\theta}}
\newcommand{\vv}{\boldsymbol{\vartheta}}
\newtheorem{Theorem}{Theorem}[section]
\newtheorem{Lemma}[Theorem]{Lemma}
\newtheorem{Remark}[Theorem]{Remark}
\title{Appearance of inaccurate results in the MUSIC algorithm with inappropriate wavenumber}
\abstract{MUltiple SIgnal Classification (MUSIC) is a well-known non-iterative location detection algorithm for small, perfectly conducting cracks in inverse scattering problems. However, when the applied wavenumbers are unknown, inaccurate locations of targets are extracted by MUSIC with inappropriate wavenumbers, a fact that has been confirmed by numerical simulations.  To date, the reason behind this phenomenon has not been theoretically investigated. Motivated by this fact, we identify the structure of MUSIC-type imaging functionals with inappropriate wavenumbers by establishing a relationship with Bessel functions of order zero of the first kind. This result explains the reasons for inaccurate results. Various results of numerical simulations with noisy data support the identified structure of MUSIC.}
\keywords{MUltiple SIgnal Classification (MUSIC), perfectly conducting cracks, inappropriate wavenumber, Bessel function, numerical experiments}
\begin{document}

\section{INTRODUCTION}
Inverse scattering problem for imaging a single, perfectly conducting crack in $\mathbb{R}^2$ satisfying a Dirichlet boundary condition has been studied in \cite{K}. In this remarkable research, Newton-type iterative reconstruction algorithm has been suggested. Generally, for a successful application, a good initial guess close to the unknown crack is needed to guarantee convergence. Furthermore. it generally requires a large amount of computational time, and it is hard to extend to the reconstruction of multiple cracks.

For an alternative, non-iterative algorithms have been developed. Among them, MUltiple SIgnal Classification (MUSIC)-type algorithm is applied to the problem for detecting small inhomogeneities or crack-like defects, refer to \cite{AGKPS,AKLP,C1,CZ,K1,P-MUSIC1,PL1,PL3,ZC} and references therein. However, for obtaining a good result, the value of applied wavenumber must be known. If not, an inaccurate locations or shapes of targets are extracted via MUSIC. Until now, this fact has been examined through the results of numerical simulations. In recent work \cite{SRACM}, an analysis of MUSIC for detecting point-like scatterers has been considered but a suitable mathematical theory about the structure of MUSIC-type imaging functional for detecting cracks must be established to diagnose such phenomenon.

In this contribution, we identify the structure of MUSIC-type imaging functional of small, perfectly conducting cracks with unknown wavenumber by finding a relationship with Bessel function of order zero of the first kind. This is based on the fact that the far-field pattern can be presented as an asymptotic expansion formula in the existence of small crack. Derived structure tells us theoretical reason of appearance of inaccurate locations through MUSIC algorithm.

The rest of the investigation is arranged as follows. In Section \ref{sec2}, two-dimensional direct scattering problem and MUSIC-type imaging algorithm are introduced briefly. The structure of MUSIC-type imaging functional without information of applied wavenumber is identified in Section \ref{sec3}. In Section \ref{sec4}, corresponding results of numerical simulation is exhibited. Finally, a short conclusion is mentioned in Section \ref{sec5}.

\section{DIRECT SCATTERING PROBLEM AND MUSIC ALGORITHM}\label{sec2}
Let $\Gamma_m$, where $m=1,2,\cdots,M$, be a linear crack of length $2h$ centered at $\mz_m$, and let $\Gamma$ be the collection of $\Gamma_m$. Assume that $\Gamma_m$ are sufficiently separated from each other.

In this paper, we consider Transverse Magnetic (TM) polarization. Let $u(\mx,\vt)\in\mathcal{C}^2(\mathbb{R}^2\backslash\Gamma)\cap\mathcal{C}(\mathbb{R}^2)$ be the time-harmonic total field that satisfies the following Helmholtz equation:
\begin{align}
\begin{aligned}\label{Helmholtz}
  \left\{\begin{array}{rcl}
  \triangle u(\mx,\vt)+k^2u(\mx,\vt)=0&\mbox{in}&\mathbb{R}^2\backslash\Gamma,\\
  u(\mx,\vt)=0&\mbox{on}&\Gamma.
  \end{array}\right.,
\end{aligned}
\end{align}
where $\vt$ is an incident direction on the two-dimensional unit circle $\mathbb{S}^1$ centered at the origin, and $k=2\pi/\lambda$ denotes a strictly positive wavenumber with wavelength $\lambda$. Throughout this paper, we assume that $k^2$ is not an eigenvalue of (\ref{Helmholtz}), $h\ll\lambda$, and sufficiently large such that
\begin{equation}\label{separated}
  k|\mz_m-\mz_{m'}|\gg0.25.
\end{equation}

Note that $u(\mx,\vt)$ can be decomposed as $u(\mx,\vt)=u_{\mathrm{inc}}(\mx,\vt)+u_{\mathrm{scat}}(\mx,\vt)$, where $u_{\mathrm{inc}}(\mx,\vt)=e^{ik\vt\cdot\mx}$ is the given incident field, and $u_{\mathrm{scat}}(\mx,\vt)\in\mathcal{C}^2(\mathbb{R}^2\backslash\Gamma)\cap\mathcal{C}(\mathbb{R}^2)$ is the unknown scattered field that satisfies the Sommerfeld radiation condition
\[\lim_{\abs{\mx}\to\infty}\sqrt{\abs{\mx}}\left(\frac{\p u_{\mathrm{scat}}(\mx,\vt)}{\p\abs{\mx}}-iku_{\mathrm{scat}}(\mx,\vt)\right)=0\]
uniformly in all directions $\vv=\mx/\abs{\mx}$.

The far-field pattern $u_{\infty}(\vv,\vt)$ of the scattered field $u_{\mathrm{scat}}(\mx,\vt)$ is defined on the two-dimensional unit circle $\mathbb{S}^1$. It can be represented as
\[u_{\mathrm{scat}}(\mx,\vt)=\frac{e^{ik\abs{\mx}}}{\sqrt{\abs{\mx}}}\left\{u_{\infty}(\vv,\vt)+O\left(\frac{1}{\abs{\mx}}\right)\right\}\]
uniformly in all directions $\vv=\mx/\abs{\mx}$ and $\abs{\mx}\longrightarrow\infty$.

From \cite{AKLP}, note that $u_{\infty}(\vv,\vt)$ can be represented by the following asymptotic form.

\begin{Lemma}
  Let $u(\mx,\vt)$ satisfy (\ref{Helmholtz}). Then the following asymptotic expansion formula holds uniformly for $0<h<2$ and $h\ll\lambda$:
  \begin{align}
  \begin{aligned}\label{AsymptoticExpansionFormula}
    u_\infty(\vv,\vt)&=-\frac{2\pi}{\ln(h/2)}\sum_{m=1}^{M}u_{\mathrm{inc}}(\mz_m,\vt)\overline{u_{\mathrm{inc}}(\mz_m,\vv)}+O\bigg(\frac{1}{|\ln h|^2}\bigg)\\
    &\approx-\frac{2\pi}{\ln(h/2)}\sum_{m=1}^{M}e^{ik(\vt-\vv)\cdot\mz_m}.
  \end{aligned}
  \end{align}
\end{Lemma}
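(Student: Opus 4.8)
The plan is to obtain (\ref{AsymptoticExpansionFormula}) from a boundary integral representation of the scattered field together with an asymptotic analysis of the induced density as the crack half-length $h$ tends to zero. First I would represent $u_{\mathrm{scat}}$ as a single-layer potential supported on $\Gamma$,
\[
u_{\mathrm{scat}}(\mx,\vt)=\int_{\Gamma}\Phi(\mx,\my)\varphi(\my,\vt)\,ds(\my),\qquad
\Phi(\mx,\my)=\frac{i}{4}H_0^{(1)}(k\abs{\mx-\my}),
\]
where $\Phi$ is the outgoing fundamental solution of the Helmholtz operator in $\mathbb{R}^2$ and $H_0^{(1)}$ is the Hankel function of the first kind of order zero. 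Imposing the Dirichlet condition $u=0$ on $\Gamma$ produces the first-kind integral equation
\[
\int_{\Gamma}\Phi(\mx,\my)\varphi(\my,\vt)\,ds(\my)=-u_{\mathrm{inc}}(\mx,\vt),\qquad \mx\in\Gamma,
\]
and the classical far-field asymptotics of $\Phi$ gives $u_\infty(\vv,\vt)=c_k\int_{\Gamma}e^{-ik\vv\cdot\my}\varphi(\my,\vt)\,ds(\my)$ with an explicit $k$-dependent constant $c_k$. Thus everything reduces to understanding $\varphi$ on the small, well-separated cracks.

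Next I would exploit $h\ll\lambda$ and the separation hypothesis (\ref{separated}). Parametrizing each $\Gamma_m$ by $\my=\mz_m+s\md_m$ with $s\in[-h,h]$ and inserting the small-argument expansion $H_0^{(1)}(z)=1+\frac{2i}{\pi}\big(\ln(z/2)+\gamma\big)+O(z^2\ln z)$, the kernel splits as $\Phi(\mx,\my)=-\frac{1}{2\pi}\ln\abs{\mx-\my}+R(\mx,\my)$ with $R$ bounded. Because the cracks are mutually far apart, the off-diagonal blocks coupling distinct $\Gamma_m$ are $O(1)$ and contribute nothing at leading logarithmic order, so the integral equation decouples into $M$ scalar equations posed on $[-h,h]$, each with right-hand side $\approx-u_{\mathrm{inc}}(\mz_m,\vt)$ since $u_{\mathrm{inc}}$ is essentially constant across a crack of length $2h\ll\lambda$.

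The heart of the argument is the logarithmic single-layer operator on a segment. Rescaling $s=h\sigma$ turns $-\frac{1}{2\pi}\int_{-h}^{h}\ln\abs{s-s'}\,\varphi\,ds'$ into a term proportional to $\ln(h/2)$ multiplying the total density plus an $h$-independent $O(1)$ operator. Solving to leading order therefore yields $\int_{-h}^{h}\varphi(\my,\vt)\,ds\approx\frac{2\pi}{\ln(h/2)}\,u_{\mathrm{inc}}(\mz_m,\vt)$; substituting this into the far-field representation and using $e^{-ik\vv\cdot\my}\approx e^{-ik\vv\cdot\mz_m}=\overline{u_{\mathrm{inc}}(\mz_m,\vv)}$ over each crack produces $\sum_{m}u_{\mathrm{inc}}(\mz_m,\vt)\overline{u_{\mathrm{inc}}(\mz_m,\vv)}$. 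Tracking the $k$-dependent constants carried by $c_k$ and the prefactor $i/4$ then reproduces the coefficient $-2\pi/\ln(h/2)$ and, after writing out the exponentials, the compact form in the second line of (\ref{AsymptoticExpansionFormula}).

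I expect the main obstacle to be the uniform invertibility of the logarithmic operator on the segment and the quantitative control of the remainder at order $O(1/\abs{\ln h}^2)$. The decisive structural fact is that the logarithmic capacity of a segment of length $2h$ equals $h/2$, so the leading coefficient $\ln(h/2)$ is nonzero exactly when $0<h<2$; at $h=2$ the capacity is $1$, the operator loses invertibility, and the expansion degenerates, which is precisely the role of the stated range for $h$. To secure the error bound I would expand the inverse of the rescaled, $h$-independent operator in a Neumann-type series and verify that the first correction is one logarithmic power smaller than the leading term, giving the claimed $O(1/\abs{\ln h}^2)$ remainder uniformly in the incident and observation directions.
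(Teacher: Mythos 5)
The paper does not prove this lemma at all: it is imported verbatim from the cited reference \cite{AKLP} with the sentence ``From \cite{AKLP}, note that\dots'', so there is no in-paper argument to compare against. Your sketch is, however, a faithful reconstruction of the standard derivation used in that reference: single-layer representation with the outgoing fundamental solution $\frac{i}{4}H_0^{(1)}(k\abs{\mx-\my})$, reduction to a first-kind logarithmic integral equation on each small arc, decoupling of the $M$ cracks via the separation condition (\ref{separated}), and the observation that the equilibrium density of a segment of length $2h$ has logarithmic capacity $h/2$, which is exactly what produces the leading coefficient $2\pi/\ln(h/2)$ and explains the restriction $0<h<2$. The Neumann-series argument you propose for the $O(\abs{\ln h}^{-2})$ remainder is also the right mechanism, since each correction costs one power of $\ln h$.

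One point deserves care. With the far-field normalization adopted in this paper, namely $u_{\mathrm{scat}}=\frac{e^{ik\abs{\mx}}}{\sqrt{\abs{\mx}}}\left(u_\infty+O(\abs{\mx}^{-1})\right)$, your constant $c_k$ equals $e^{i\pi/4}/\sqrt{8\pi k}$, and carrying it through your computation yields a $k$-dependent prefactor rather than the bare $-2\pi/\ln(h/2)$ stated in the lemma; the clean constant only emerges under the alternative normalization in which the factor $e^{i\pi/4}/\sqrt{8\pi k}$ is absorbed into the definition of $u_\infty$. This is a discrepancy between the paper's stated conventions and the quoted lemma rather than a flaw in your argument, but if you intend the proof to match the displayed formula exactly you should state which normalization you are using and track the constant to the end rather than asserting that it ``reproduces'' $-2\pi/\ln(h/2)$.
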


Equation (\ref{AsymptoticExpansionFormula}) can be applied to establish a MUSIC-type imaging functional. To this end, the eigenvalue structure of the modified sparse row (MSR) matrix $\mathbb{K}=\left[u_\infty(\vv_j,\vt_l)\right]_{j,l=1}^{N}$ is utilized. Suppose $\vv_j=-\vt_j$ for all $j$. Then $\mathbb{K}$ is a complex symmetric matrix but not Hermitian; thus, instead of eigenvalue decomposition, we perform the Singular Value Decomposition (SVD) of $\mathbb{K}$ (see \cite{C}):
\begin{equation}\label{SVD}
  \mathbb{K}=\sum_{m=1}^{M}\sigma_m\mathbf{U}_m\mathbf{V}_m^*,
\end{equation}
where the superscript $*$ denotes the Hermitian. Then $\set{\mathbf{U}_1,\mathbf{U}_2,\cdots,\mathbf{U}_M}$ is the basis for the signal space of $\mathbb{K}$. Therefore, one can define the projection operator onto the null (or noise) subspace, $\mathbf{P}_{\mathrm{noise}}:\mathbb{C}^{N\times1}\longrightarrow\mathbb{C}^{N\times1}$. This projection is given explicitly by
\begin{equation}\label{Projection}
\mathbf{P}_{\mathrm{noise}}:=\mathbb{I}(N)-\sum_{m=1}^{M}\mathbf{U}_m\mathbf{U}_m^*,
\end{equation}
where $\mathbb{I}(N)$ denotes the $N\times N$ identity matrix. For any point $\mx\in\mathbb{R}^2$, a test vector $\mf(\mx;k)\in\mathbb{C}^{N\times1}$ can be defined as
\begin{equation}\label{testvector}
  \mf(\mx;k)=\frac{1}{N}\bigg[e^{i k\vt_1\cdot\mx},e^{i k\vt_2\cdot\mx},\cdots,e^{i k\vt_N\cdot\mx}\bigg]^T.
\end{equation}
As a result, the MUSIC-type imaging functional $\mathbb{E}(\mx;k)$ can be designed as follows:
\begin{equation}\label{MUSICimaging}
  \mathbb{E}(\mx;k)=\frac{1}{|\mathbf{P}_{\mathrm{noise}}(\mf(\mx;k))|}.
\end{equation}
The map of $\mathbb{E}(\mx;k)$ will have peaks of large and small magnitudes at $\mz_m\in\Gamma$ and $\mz_m\in\mathbb{R}^2\backslash\Gamma$, respectively.

\section{STRUCTURE OF IMAGING FUNCTIONAL}\label{sec3}
In light of the discussion in the previous section, one must know the exact value of $k$; if the exact value is not known, the exact locations of $\Gamma_m$ cannot be detected. This fact has already been identified in previous research through numerical simulations. Therefore, we explore the structure of (\ref{MUSICimaging}) with an unknown wavenumber. First, recall the following results.

\begin{Lemma}[See \cite{AGKPS}]
  For $\vt_n\in\mathbb{S}^1$, $n=1,2,\cdots,N$, the left singular vectors $\mU_m$ of the MSR matrix $\mathbb{K}$ is of the form
  \begin{equation}\label{SingularVectorRelation}
    \mU_m\approx\mf(\mx;\omega)+\mathcal{O}\left(\frac{1}{|\ln h|^2}\right),
  \end{equation}
  for $m=1,2,\cdots,M$.
\end{Lemma}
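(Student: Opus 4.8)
The plan is to exploit the rank-$M$ separable structure that the asymptotic expansion formula (\ref{AsymptoticExpansionFormula}) imposes on $\mathbb{K}$, and then to identify its left singular vectors with the test vectors $\mf(\mz_m;k)$. First I would substitute the symmetry assumption $\vv_j=-\vt_j$ into (\ref{AsymptoticExpansionFormula}), so that the $(j,l)$ entry of $\mathbb{K}$ becomes
\[
  u_\infty(\vv_j,\vt_l)\approx-\frac{2\pi}{\ln(h/2)}\sum_{m=1}^{M}e^{ik\vt_j\cdot\mz_m}e^{ik\vt_l\cdot\mz_m}.
\]
Setting $\mg_m:=N\mf(\mz_m;k)=\big[e^{ik\vt_1\cdot\mz_m},\dots,e^{ik\vt_N\cdot\mz_m}\big]^T$, this exhibits $\mathbb{K}$ as the symmetric factorization $\mathbb{K}\approx-\frac{2\pi}{\ln(h/2)}\sum_{m=1}^{M}\mg_m\mg_m^T$, modulo the $O(|\ln h|^{-2})$ remainder. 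Thus $\mathbb{K}$ is, up to that order, a matrix of rank $M$ whose range lies in $\mathrm{span}\set{\mg_1,\dots,\mg_M}$.

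The second step is to show that these generating vectors are nearly orthonormal after normalization. Computing the Hermitian inner product gives
\[
  \mf(\mz_m;k)^*\mf(\mz_{m'};k)=\frac{1}{N^2}\sum_{j=1}^{N}e^{ik\vt_j\cdot(\mz_{m'}-\mz_m)}\approx\frac{1}{N}\cdot\frac{1}{2\pi}\int_{\mathbb{S}^1}e^{ik\vt\cdot(\mz_{m'}-\mz_m)}\,d\vt=\frac{1}{N}J_0(k\abs{\mz_m-\mz_{m'}}),
\]
where the Riemann sum over the equidistributed directions $\vt_j$ converges to the integral representation of the Bessel function $J_0$ of order zero of the first kind, while the diagonal term ($m=m'$) equals $1/N$. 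The separation hypothesis (\ref{separated}) together with the decay of $J_0$ then forces the off-diagonal Gram entries to be negligible, so that $\set{\sqrt{N}\,\mf(\mz_m;k)}_{m=1}^{M}$ is an approximately orthonormal system spanning the range of $\mathbb{K}$.

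Finally I would close the argument by a perturbation step. Were the normalized test vectors exactly orthonormal, the factorization above would already be a singular value decomposition, forcing $\mU_m=\sqrt{N}\,\mf(\mz_m;k)$ up to a unimodular factor; the approximate orthonormality and the $O(|\ln h|^{-2})$ remainder then show that the left singular subspace is spanned, to the same order, by the $\mf(\mz_m;k)$, which is exactly (\ref{SingularVectorRelation}) with $\omega=k$ the true wavenumber. I expect the main obstacle to be precisely this last step: passing from an approximate, non-orthogonal rank-$M$ factorization to a statement about individual singular vectors requires a stability result for singular subspaces (a Wedin or Davis--Kahan type $\sin\Theta$ bound), and one must take care to pair each $\mU_m$ with the correct $\mf(\mz_m;k)$ when the singular values $\sigma_m$ are clustered. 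Quantifying how the Gram defect and the remainder in (\ref{AsymptoticExpansionFormula}) propagate into that bound is the technical heart of the proof.
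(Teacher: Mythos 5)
The paper itself gives no proof of this lemma---it is stated as a citation to \cite{AGKPS}---so there is nothing internal to compare against; your outline is the standard argument used in that literature and is sound in its main lines. Two points deserve care beyond what you wrote. First, because $\vv_j=-\vt_j$ the factorization you obtain is $\mathbb{K}\approx-\frac{2\pi}{\ln(h/2)}\sum_{m}\mg_m\mg_m^{T}$ with a \emph{transpose}, not a Hermitian adjoint, so even in the exactly orthonormal case this is a Takagi-type factorization $\mathbb{K}=\mU\Sigma\mU^{T}$ rather than an SVD in the form $\sum\sigma_m\mU_m\mV_m^{*}$; one must observe that $\mV_m=\overline{\mU_m}$ up to phase and absorb the phase of the scalar $-2\pi/\ln(h/2)$ into $\mU_m$ and $\mV_m$ so that the $\sigma_m$ come out positive. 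Consequently $\mU_m$ is determined only up to a unimodular constant, which you correctly note but which the lemma as printed suppresses (along with a normalization mismatch: $|\mf(\mz_m;k)|=1/\sqrt{N}$ under definition (\ref{testvector}), whereas $\mU_m$ is a unit vector, and the statement's ``$\mf(\mx;\omega)$'' should read $\mf(\mz_m;k)$). Second, your final perturbation step is indeed where the real work lies: the Gram defect is only $O\bigl(J_0(k|\mz_m-\mz_{m'}|)\bigr)$, controlled by the rather weak separation hypothesis (\ref{separated}), and a Wedin-type bound cleanly controls the singular \emph{subspace} but pins down individual $\mU_m$ only when the $\sigma_m$ are well separated; since $\sigma_m\propto 2\pi N/|\ln(h/2)|$ for every $m$ to leading order, the singular values are in fact clustered, and a fully rigorous version of the lemma should either be phrased at the level of the signal subspace (which is all that the projection (\ref{Projection}) actually uses) or invoke the near-diagonality of the Gram matrix to diagonalize the perturbation explicitly. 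With that caveat your route is essentially the intended one.
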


\begin{Lemma}[See \cite{P-SUB3}]
  Assume $\set{\vt_n:n=1,2,\cdots,N}$ spans $\mathbb{S}^1$ such that
  \[\vt_n=\bigg[\cos\theta_n,\sin\theta_n\bigg]^T,\quad\theta_n=\theta_1+(\theta_N-\theta_1)\frac{n-1}{N-1},\]
  where $0=\theta_1<\theta_2<\cdots<\theta_{N-1}<\theta_N=2\pi$. Then for $\vt\in\mathbb{S}^1$, $\mx\in\mathbb{R}^2$, and sufficiently large $N$,
  \begin{equation}\label{RelationBessel}
    \frac{1}{N}\sum_{n=1}^{N}e^{i\omega\vt_n\cdot\mx}=\frac{1}{2\pi}\int_{\mathbb{S}^1}e^{i\omega\vt\cdot\mx}d\vt=J_0(\omega|\mx|),
  \end{equation}
  where $J_0$ denotes the Bessel function of order zero of the first kind.
\end{Lemma}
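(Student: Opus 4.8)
The plan is to establish the two equalities in (\ref{RelationBessel}) separately: the left-hand equality is a quadrature (Riemann-sum) statement, while the right-hand equality is a classical integral representation of $J_0$. Writing $g(\theta)=e^{i\omega(\cos\theta,\sin\theta)\cdot\mx}$, the prescribed nodes $\theta_n$ are equally spaced on $[0,2\pi]$ with step $\Delta\theta=(\theta_N-\theta_1)/(N-1)=2\pi/(N-1)$, so that $\frac{1}{N}\sum_{n=1}^N e^{i\omega\vt_n\cdot\mx}$ is essentially the composite trapezoidal approximation of $\frac{1}{2\pi}\int_0^{2\pi}g(\theta)\,d\theta$. Because $g$ is smooth and $2\pi$-periodic, the periodic trapezoidal rule on a full period is spectrally accurate (its genuine quadrature error decays faster than any power of $1/N$, since all Euler--Maclaurin boundary corrections cancel by periodicity). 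The only care needed is bookkeeping: the statement uses the weight $1/N$ rather than the natural step $\Delta\theta/(2\pi)=1/(N-1)$, and the endpoints $\theta_1=0$ and $\theta_N=2\pi$ coincide on $\mathbb{S}^1$ with $g(\theta_1)=g(\theta_N)$. Together these introduce an $O(1/N)$ discrepancy, so the first equality holds in the limit of large $N$, which is exactly the regime asserted.

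Next, for the right-hand equality, I would pass to polar coordinates and write $\mx=\abs{\mx}(\cos\phi,\sin\phi)$. Then $\vt\cdot\mx=\abs{\mx}\cos(\theta-\phi)$, and
\[
  \frac{1}{2\pi}\int_{\mathbb{S}^1}e^{i\omega\vt\cdot\mx}\,d\vt
  =\frac{1}{2\pi}\int_0^{2\pi}e^{i\omega\abs{\mx}\cos(\theta-\phi)}\,d\theta
  =\frac{1}{2\pi}\int_0^{2\pi}e^{i\omega\abs{\mx}\cos\psi}\,d\psi,
\]
where the last step uses the substitution $\psi=\theta-\phi$ together with $2\pi$-periodicity to remove the dependence on $\phi$. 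This is precisely the standard integral representation of the Bessel function, $J_0(z)=\frac{1}{2\pi}\int_0^{2\pi}e^{iz\cos\psi}\,d\psi$, evaluated at $z=\omega\abs{\mx}$.

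To make the Bessel identification self-contained rather than merely quoted, I would expand the exponential in its power series and integrate term by term: the odd powers of $\cos\psi$ integrate to zero, while $\int_0^{2\pi}\cos^{2k}\psi\,d\psi=2\pi\binom{2k}{k}4^{-k}$, and collecting terms reproduces the series $\sum_{k=0}^{\infty}(-1)^k(z/2)^{2k}/(k!)^2=J_0(z)$. The main obstacle I anticipate is not in this step, which is a routine term-by-term computation, but in quantifying the first (quadrature) step precisely: making explicit what ``sufficiently large $N$'' means and controlling the $1/N$ versus $1/(N-1)$ normalization mismatch and the coincident endpoints. This is a mild difficulty, since the underlying periodic trapezoidal rule converges rapidly; the genuine content of the lemma is the reduction to the $J_0$ integral.
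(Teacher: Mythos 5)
Your proof is correct and is the standard argument one would give for this lemma; the paper itself supplies no proof (it simply cites \cite{P-SUB3}), and your two-step route --- periodic trapezoidal quadrature for the first equality, then the classical representation $J_0(z)=\frac{1}{2\pi}\int_0^{2\pi}e^{iz\cos\psi}\,d\psi$ for the second --- is exactly what that reference relies on. Your explicit attention to the $1/N$ versus $1/(N-1)$ normalization and the coincident endpoints is a welcome refinement, since it makes precise why the stated identity is only an approximation for finite $N$ and why ``sufficiently large $N$'' is needed.
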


Since we assumed that the exact value of $k$ is unknown, let us choose a fixed value of $\eta$ instead of (\ref{testvector}) and apply the corresponding test vector \[\mf(\mx;\eta)=\frac{1}{N}\bigg[e^{i\eta\vt_1\cdot\mx},e^{i\eta\vt_2\cdot\mx},\cdots,e^{i\eta\vt_N\cdot\mx}\bigg]^T\]
to (\ref{MUSICimaging}) such that
\begin{equation}\label{MUSICimagingUnknown}
  \mathbb{E}(\mx;\eta)=\frac{1}{|\mathbf{P}_{\mathrm{noise}}(\mf(\mx;\eta))|}.
\end{equation}
Then, the following result can be obtained.

\begin{Theorem}\label{StructureMUSICTheorem}
  For sufficiently large $N$ and $\omega$, $\mathbb{E}(\mx;\eta)$ is of the form
  \[\mathbb{E}(\mx;\eta)\approx\left(1-\sum_{m=1}^{M}J_0(|\eta\mx-k\mz_m|)+\mathcal{O}\left(\frac{1}{|\ln h|^2}\right)\right)^{-1/2}.\]
\end{Theorem}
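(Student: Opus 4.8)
The plan is to substitute the test vector $\mf(\mx;\eta)$ into (\ref{MUSICimagingUnknown}) and to reduce the norm $|\mathbf{P}_{\mathrm{noise}}(\mf(\mx;\eta))|$ to a sum of scalar products that the Bessel relation (\ref{RelationBessel}) evaluates explicitly. First I would use that $\set{\mU_1,\ldots,\mU_M}$ is an orthonormal basis of the signal space, so that the operator in (\ref{Projection}) is a genuine orthogonal projection: $\mathbf{P}_{\mathrm{noise}}^*=\mathbf{P}_{\mathrm{noise}}=\mathbf{P}_{\mathrm{noise}}^2$. Writing $|\mathbf{P}_{\mathrm{noise}}(\mf(\mx;\eta))|^2=\langle\mf(\mx;\eta),\mathbf{P}_{\mathrm{noise}}\mf(\mx;\eta)\rangle$ and expanding (\ref{Projection}) gives
\[|\mathbf{P}_{\mathrm{noise}}(\mf(\mx;\eta))|^2=|\mf(\mx;\eta)|^2-\sum_{m=1}^{M}\big|\mU_m^*\mf(\mx;\eta)\big|^2.\]
After the normalization that makes the diagonal term contribute the leading constant $1$, everything is reduced to the $M$ scalar products $\mU_m^*\mf(\mx;\eta)$.

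Next I would insert the singular-vector relation (\ref{SingularVectorRelation}), which identifies each $\mU_m$ with the test vector formed from the \emph{true} wavenumber $k$ at the crack center, i.e.\ $\mU_m\approx\mf(\mz_m;k)+\mathcal{O}(|\ln h|^{-2})$. The phases in the resulting exponential sum then combine into a single vector, so that
\[\mU_m^*\mf(\mx;\eta)\approx\frac{1}{N}\sum_{n=1}^{N}e^{\,i\vt_n\cdot(\eta\mx-k\mz_m)}+\mathcal{O}\!\left(\frac{1}{|\ln h|^2}\right).\]
For large $N$ the sum is replaced, via (\ref{RelationBessel}) applied to the vector $\eta\mx-k\mz_m$, by $\frac{1}{2\pi}\int_{\mathbb{S}^1}e^{i\vt\cdot(\eta\mx-k\mz_m)}\,d\vt=J_0(|\eta\mx-k\mz_m|)$. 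This is precisely where the inaccuracy originates: the argument is governed by $\eta\mx-k\mz_m$ rather than by $\eta(\mx-\mz_m)$, so the maximum of $J_0$ (attained at argument $0$) is displaced from the true center $\mz_m$ to $\mx=(k/\eta)\mz_m$.

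Assembling the pieces, I would substitute these values into the reduction of the first step; collecting the $M$ Bessel terms together with the normalized leading constant, and using the separation hypothesis to discard the cross contributions produced when the approximate singular vectors replace the exact orthonormal ones, yields $|\mathbf{P}_{\mathrm{noise}}(\mf(\mx;\eta))|^2\approx 1-\sum_{m=1}^{M}J_0(|\eta\mx-k\mz_m|)+\mathcal{O}(|\ln h|^{-2})$, and taking the reciprocal square root gives the claimed form of $\mathbb{E}(\mx;\eta)$. I expect the delicate points to be the following three rather than any isolated computation. (i) The $\mathcal{O}(|\ln h|^{-2})$ error carried by (\ref{SingularVectorRelation}) must be propagated through the squaring and through the inversion $(\cdot)^{-1/2}$ without amplification, which requires keeping the bracket bounded away from $0$ off the spurious peaks. (ii) The discrete-to-continuous passage (\ref{RelationBessel}) must hold uniformly in $\mx$ over the search region for the fixed $N$, which is where the hypothesis that $N$ and $\omega$ be large enters. (iii) The approximate singular vectors $\mf(\mz_m;k)$ are not exactly orthogonal---their pairwise products behave like $J_0(k|\mz_m-\mz_{m'}|)$---so the cross terms must be absorbed into the error, which is exactly what the separation hypothesis (\ref{separated}), $k|\mz_m-\mz_{m'}|\gg0.25$, secures through the decay of $J_0$. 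Fixing the precise normalization constants in this last assembly (the leading $1$ and the weight of each Bessel term) is the part that most needs care.
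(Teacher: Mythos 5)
Your proposal follows essentially the same route as the paper: both arguments rest on the same two lemmas, identifying each $\mU_m$ with the true-wavenumber test vector $\mf(\mz_m;k)$ up to $\mathcal{O}(|\ln h|^{-2})$ and then converting the discrete sums $\frac{1}{N}\sum_{n}e^{i\vt_n\cdot(\eta\mx-k\mz_m)}$ into $J_0(|\eta\mx-k\mz_m|)$ via (\ref{RelationBessel}). Your organization through the exact projection identity $|\mathbf{P}_{\mathrm{noise}}\mf|^2=|\mf|^2-\sum_{m}|\mU_m^*\mf|^2$ is in fact tidier than the paper's componentwise expansion into the quantities $\Phi_1$ and $\Phi_2$, and it makes the separation hypothesis (\ref{separated}) unnecessary for killing cross terms, since exact orthonormality of the $\mU_m$ is used before the approximation is inserted (the paper instead needs (\ref{separated}) to reduce $\frac{1}{N}\sum_n\Phi_2\overline{\Phi}_2$ to $\sum_m J_0(|\eta\mx-k\mz_m|)^2$). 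One point you should reconcile: your own second step gives $|\mU_m^*\mf(\mx;\eta)|^2\approx J_0(|\eta\mx-k\mz_m|)^2$, so the bracket your assembly actually produces is $1-\sum_{m}J_0(|\eta\mx-k\mz_m|)^2$, not $1-\sum_{m}J_0(|\eta\mx-k\mz_m|)$ as you assert in the final sentence. The paper's proof has exactly the same discrepancy --- its combination of (\ref{term1}) and (\ref{term2}) yields $-2\sum_m J_0^2+\sum_m J_0^2=-\sum_m J_0^2$, and the square is silently dropped in the concluding line --- so you have inherited the flaw from the statement itself; since $J_0(0)=1$ is still the maximum, the qualitative conclusion that peaks occur at $\mx=(k/\eta)\mz_m$ is unaffected, but a fully rigorous write-up should carry the square through.
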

\begin{proof}
  Based on (\ref{SingularVectorRelation}), since $\mf(\mz_mk)\approx\mU_m$ for all $m=1,2,\cdots,M$, it follows that
  \begin{align*} \mathbf{P}_{\mathrm{noise}}&(\mf(\mx;\eta))=\left(\mathbb{I}(N)-\sum_{m=1}^{M}\mathbf{U}_m\mathbf{U}_m^*\right)\mf(\mx;\eta)\\
  \approx&\left(\mathbb{I}(N)-\sum_{m=1}^{M}\bigg(\mf(\mz_m;\omega)+\mathcal{O}(|\ln h|^{-2})\bigg)\bigg(\mf(\mz_m;\omega)+\mathcal{O}(|\ln h|^{-2})\bigg)^*\right)\mf(\mx;\eta)\\
    =&\frac{1}{\sqrt{N}}\left[
        \begin{array}{c}
          e^{i\eta\vt_1\cdot\mx} \\
          e^{i\eta\vt_2\cdot\mx} \\
          \vdots \\
          e^{i\eta\vt_N\cdot\mx}
        \end{array}
      \right]\\
      &-
      \frac{1}{N\sqrt{N}}\sum_{m=1}^{M}\left[
        \begin{array}{c}
          \medskip\displaystyle e^{i\eta\vt_1\cdot\mx}+\sum_{n\in\mathbb{N}_1}e^{i k\vt_1\cdot\mz_m}e^{i\vt_n\cdot(\eta\mx-k\mz_m)}+\mathcal{O}\left(\frac{1}{|\ln h|^2}\right) \\
          \medskip\displaystyle e^{i\eta\vt_2\cdot\mx}+\sum_{n\in\mathbb{N}_2}e^{i k\vt_2\cdot\mz_m}e^{i\vt_n\cdot(\eta\mx-k\mz_m)}+\mathcal{O}\left(\frac{1}{|\ln h|^2}\right) \\
          \medskip\vdots \\
          \medskip\displaystyle e^{i\eta\vt_N\cdot\mx}+\sum_{n\in\mathbb{N}_N}e^{i k\vt_N\cdot\mz_m}e^{i\vt_n\cdot(\eta\mx-k\mz_m)}+\mathcal{O}\left(\frac{1}{|\ln h|^2}\right)
        \end{array}
      \right],
  \end{align*}
  where $N_j:=\set{1,2,\cdots,N}\backslash\set{j}$ for $j=1,2,\cdots,N$. Since,
  \[e^{i\eta\vt_j\cdot\mx}=e^{ik\vt_j\cdot\mz_m}e^{i\vt_j\cdot(\eta\mx-k\mz_m)},\]
  applying (\ref{RelationBessel}), we can evaluate
  \begin{align*}
  e^{i\eta\vt_j\cdot\mx}+\sum_{n\in\mathbb{N}_j}e^{i k\vt_2\cdot\mz_m}e^{i\vt_n\cdot(\eta\mx-k\mz_m)}&=e^{ik\vt_j\cdot\mz_m}\sum_{n=1}^{N}e^{i\vt_n\cdot(\eta\mx-k\mz_m)}\\
  &=Ne^{ik\vt_j\cdot\mz_m}J_0(|\eta\mx-k\mz_m|).
  \end{align*}
  Hence,
      \[\mathbf{P}_{\mathrm{noise}}(\mf(\mx;\eta))=\frac{1}{\sqrt{N}}\left[
        \begin{array}{c}
          \medskip\displaystyle e^{i\eta\vt_1\cdot\mx}-\sum_{m=1}^{M}e^{i k\vt_1\cdot\mz_m}J_0(|\eta\mx-k\mz_m|)+\mathcal{O}\left(\frac{1}{|\ln h|^2}\right) \\
          \medskip\displaystyle e^{i\eta\vt_2\cdot\mx}-\sum_{m=1}^{M}e^{i k\vt_2\cdot\mz_m}J_0(|\eta\mx-k\mz_m|)+\mathcal{O}\left(\frac{1}{|\ln h|^2}\right) \\
          \medskip\vdots \\
          \medskip\displaystyle e^{i\eta\vt_N\cdot\mx}-\sum_{m=1}^{M}e^{i k\vt_N\cdot\mz_m}J_0(|\eta\mx-k\mz_m|)+\mathcal{O}\left(\frac{1}{|\ln h|^2}\right)
        \end{array}
      \right].\]
  Based on this result, it follows that
  \begin{align*}
    |\mathbf{P}_{\mathrm{noise}}(\mf(\mx;\eta))|&=\bigg(\mathbf{P}_{\mathrm{noise}}(\mf(\mx;\eta))\cdot\overline{\mathbf{P}_{\mathrm{noise}}(\mf(\mx;\eta))}\bigg)^{1/2}\\
    &=\left(\frac{1}{N}\sum_{n=1}^{N}\left\{1-(\Phi_1+\overline{\Phi}_1)+(\Phi_2\overline{\Phi}_2)+\mathcal{O}\left(\frac{1}{|\ln h|^2}\right)\right\}\right)^{1/2},
    \end{align*}
  where
  \begin{align*}
  \Phi_1&=\sum_{m=1}^{M}e^{i\vt_n\cdot(\eta\mx-k\mz_m)}J_0(|\eta\mx-k\mz_m|)\\
  \Phi_2&=\sum_{m=1}^{M}e^{i k\vt_n\cdot\mz_m}J_0(|\eta\mx-k\mz_m|).
  \end{align*}

  Applying (\ref{RelationBessel}) again, we see that
  \begin{multline*}
    \frac{1}{N}\sum_{n=1}^{N}\Phi_1=\frac{1}{N}\sum_{n=1}^{N}\sum_{m=1}^{M}e^{i\vt_n\cdot(\eta\mx-k\mz_m)}J_0(|\eta\mx-k\mz_m|)\\
    =\sum_{m=1}^{M}\left(\frac{1}{N}\sum_{n=1}^{N}e^{i\vt_n\cdot(\eta\mx-k\mz_m)}\right)J_0(|\eta\mx-k\mz_m|)=\sum_{m=1}^{M}J_0(|\eta\mx-k\mz_m|)^2.
  \end{multline*}
  Hence,
  \begin{equation}\label{term1}
    \frac{1}{N}\sum_{n=1}^{N}(\Phi_1+\overline{\Phi}_1)=2\sum_{m=1}^{M}J_0(|\eta\mx-k\mz_m|)^2.
  \end{equation}

  Since the singular vectors are orthonormal to each other, $\mU_m\cdot\mU_{m'}=0$ for $m,m'=1,2,\cdots,M$ and $m\ne m'$. Hence, based on (\ref{separated}),
  \begin{align*}
  \mU_m\cdot\mU_{m'}\approx\frac{1}{N}\sum_{n=1}^{N}e^{ik\vt_n\cdot(\mz_m-\mz_{m'})}=J_0(k|\mz_m-\mz_{m'}|)\\
  \approx\sqrt{\frac{2}{k\pi|\mz_m-\mz_{m'}|}}\cos\left(k|\mz_m-\mz_{m'}|-\frac{\pi}{4}\right)\longrightarrow0.
  \end{align*}
  For $m,m'=1,2,\cdots,M$ and $m\ne m'$,
  \begin{align}
  \begin{aligned}\label{term2}
    &\frac{1}{N}\sum_{n=1}^{N}\Phi_2\overline{\Phi}_2\\
    &=\frac{1}{N}\sum_{n=1}^{N}\left(\sum_{m=1}^{M}e^{ik\vt_n\cdot\mz_m}J_0(|\eta\mx-k\mz_m|)\right)\left(\sum_{m'=1}^{M}e^{-ik\vt_n\cdot\mz_{m'}}J_0(|\eta\mx-k\mz_{m'}|)\right)\\
    &=\frac{1}{N}\sum_{n=1}^{N}\sum_{m=1}^{M}\sum_{m'=1}^{M}e^{ik\vt_n\cdot\mz_m}J_0(|\eta\mx-k\mz_m|)e^{-ik\vt_n\cdot\mz_{m'}}J_0(|\eta\mx-k\mz_{m'}|)\\
    &=\sum_{m=1}^{M}\sum_{m'=1}^{M}\left(\frac{1}{N}\sum_{n=1}^{N}e^{ik\vt_n\cdot(\mz_m-\mz_{m'})}\right)J_0(|\eta\mx-k\mz_m|)J_0(|\eta\mx-k\mz_{m'}|)\\
    &=\sum_{m=1}^{M}\sum_{m'=1}^{M}J_0(k|\mz_m-\mz_{m'}|)J_0(|\eta\mx-k\mz_m|)J_0(|\eta\mx-k\mz_{m'}|)\\
    &=\sum_{m=1}^{M}J_0(|\eta\mx-k\mz_m|)^2.
  \end{aligned}
  \end{align}

  Therefore, by combining (\ref{term1}) and (\ref{term2}), $|\mathbf{P}_{\mathrm{noise}}(\mf(\mx;\eta))|$ can be represented as
  \[|\mathbf{P}_{\mathrm{noise}}(\mf(\mx;\eta))|=\left(1-\sum_{m=1}^{M}J_0(|\eta\mx-k\mz_m|)+\mathcal{O}\left(\frac{1}{|\ln h|^2}\right)\right)^{1/2}.\]
  This completes the proof.
\end{proof}

\begin{Remark}\label{Remark}
  On the basis of the result in Theorem \ref{StructureMUSICTheorem}, we can verify some some properties. They can be summarized as follows.
  \begin{enumerate}
    \item $J_0(x)$ has a maximum value of $1$ at $x=0$; therefore, the map of $\mathbb{E}(\mx;\eta)$ will have plots of the magnitude (theoretically $+\infty$) at $\mx=(k/\eta)\mz_m$ instead of at the true location $\mz_m$ if $h$ is small enough. This is the theoretical reason why inaccurate crack locations are extracted by MUSIC.
    \item If a crack (centered at $\mz$) is located at the origin, its true location can be identified exactly for any value of $\eta$ since $k\mz\equiv0$. Similarly, if a crack is located on the $x-$axis (or $y-$axis), one can find exact $y-$th (or $x-$th) component of location of crack.
    \item For any value of $\eta$, we can extract the information of existence and total number of cracks however, exact location of cracks cannot be extracted without \textit{a priori} information of true value of $k$. Therefore, an effective algorithm for estimating $k$ must be investigated for finding exact location. Corresponding algorithm will be discussed in the next section.
  \end{enumerate}
\end{Remark}

\section{RESULTS OF NUMERICAL SIMULATIONS AND FINDING EXACT LOCATIONS}\label{sec4}
In this section, we discuss the results of numerical simulations that support Theorem \ref{StructureMUSICTheorem}. In particular, $N=16$ different incident and observation directions were applied.

Based on \cite{N}, every far-field pattern $u_{\infty}(\vv_j,\vt_l)$, $j,l=1,2,\cdots,N$, was computed by a second-kind Fredholm integral equation along the crack to avoid \textit{inverse crime}. Next, $20$ dB white Gaussian random noise was added to the unperturbed data using the MATLAB \texttt{awgn} command.

Three cracks of length $h=0.05$ were chosen for numerical simulations:
\begin{align*}
  \Gamma_1&=\set{[s-0.6,-0.2]^T:-h\leq s\leq h}\\
  \Gamma_2&=\set{\mathrm{R}_{\pi/4}[s+0.4,s+0.35]^T:-h\leq s\leq h}\\
  \Gamma_3&=\set{\mathrm{R}_{7\pi/6}[s+0.25,s-0.6]^T:-h\leq s\leq h}.
\end{align*}
Here, $\mathrm{R}_{\varphi}$ denotes the rotation by $\varphi$.

Figure \ref{Result1} displays the maps of $\mathbb{E}(\mx;\eta)$ for three small cracks for the true value $k=2\pi/0.5\approx12.5664$. Following the result in Theorem \ref{StructureMUSICTheorem}, the locations of $(k/\eta)\mz_m$ were identified instead of $\mz_m$. Notice that the identified locations of $\Gamma_m$ are scattered when $\eta<k$ (see Figure \ref{Result1-1}) and concentrated when $\eta>k$ (see Figures \ref{Result1-2} and \ref{Result1-3}). Figure \ref{Result2} displays the maps of $\mathbb{E}(\mx;\eta)$ for three small cracks for the true value $k=2\pi/0.3\approx20.9440$. Similar to the results in Figure \ref{Result1}, the locations of $(k/\eta)\mz_m$ were identified instead of $\mz_m$. Notice that when $\eta=20$, since $\eta\approx k$, very accurate locations of $\Gamma_m$ are extracted. Based on the results in Figure \ref{Result1} and Figure \ref{Result2}, the existence and total number of cracks can be determined, but it is impossible to find true locations without \textit{a priori} information of $k$.

\begin{figure}
\begin{center}
\subfloat[$\eta=10$]{\label{Result1-1}\includegraphics[width=0.495\textwidth]{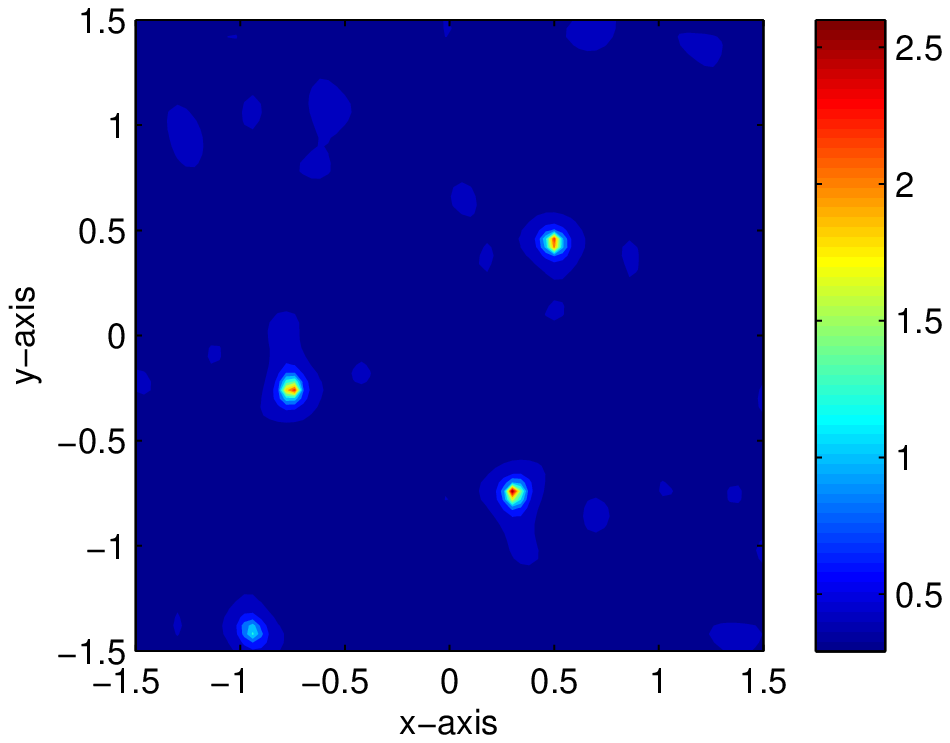}}
\subfloat[$\eta=15$]{\label{Result1-2}\includegraphics[width=0.495\textwidth]{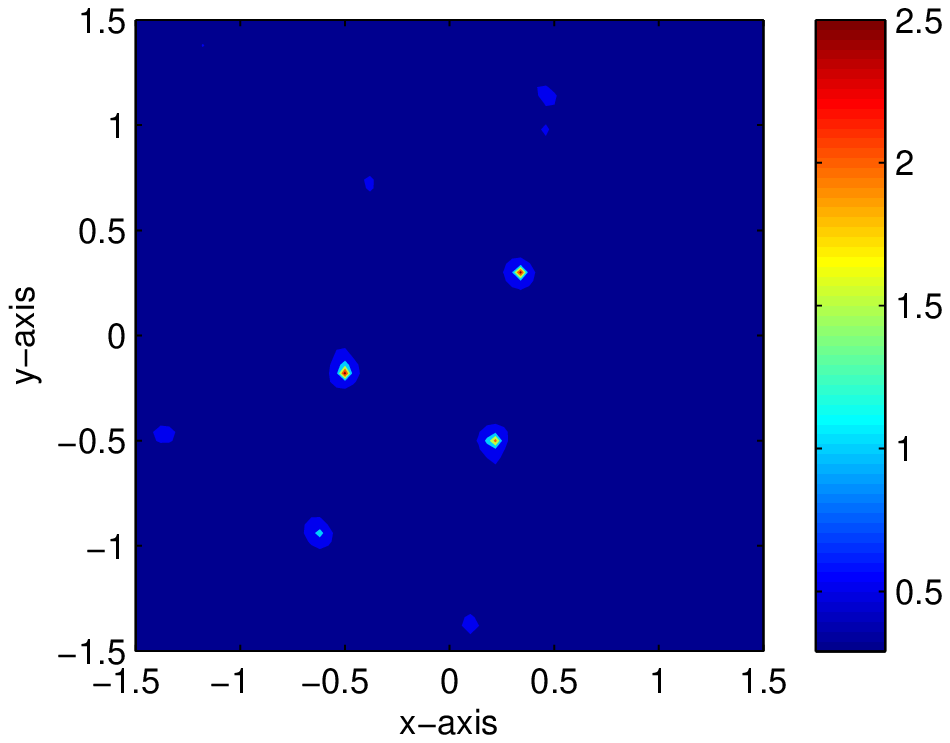}}\\
\subfloat[$\eta=20$]{\label{Result1-3}\includegraphics[width=0.495\textwidth]{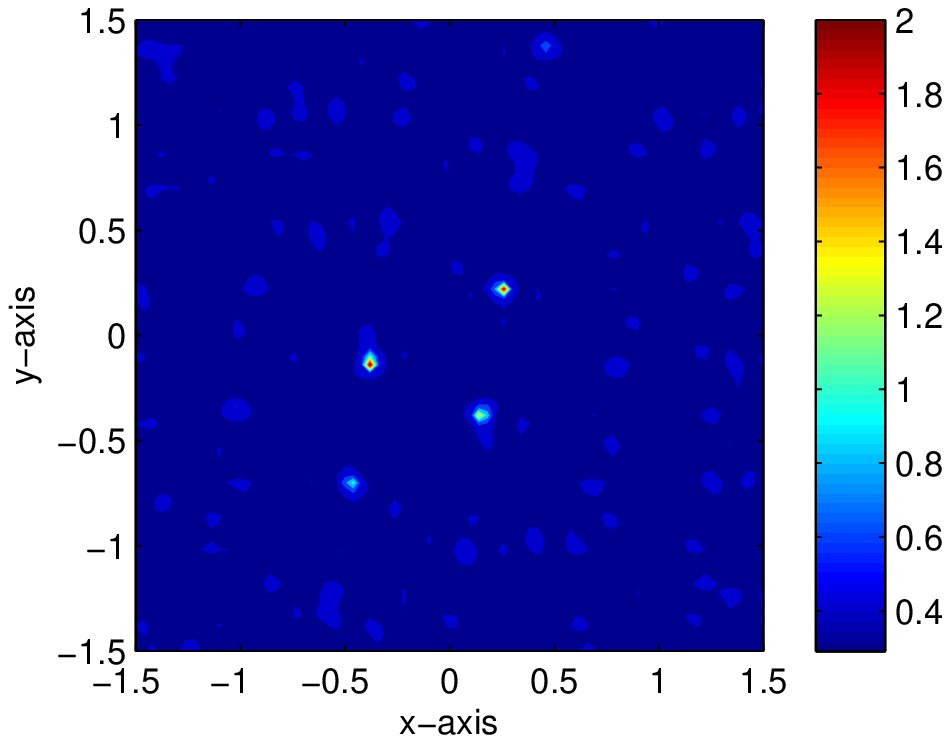}}
\subfloat[$\eta=k$]{\label{Result1-4}\includegraphics[width=0.495\textwidth]{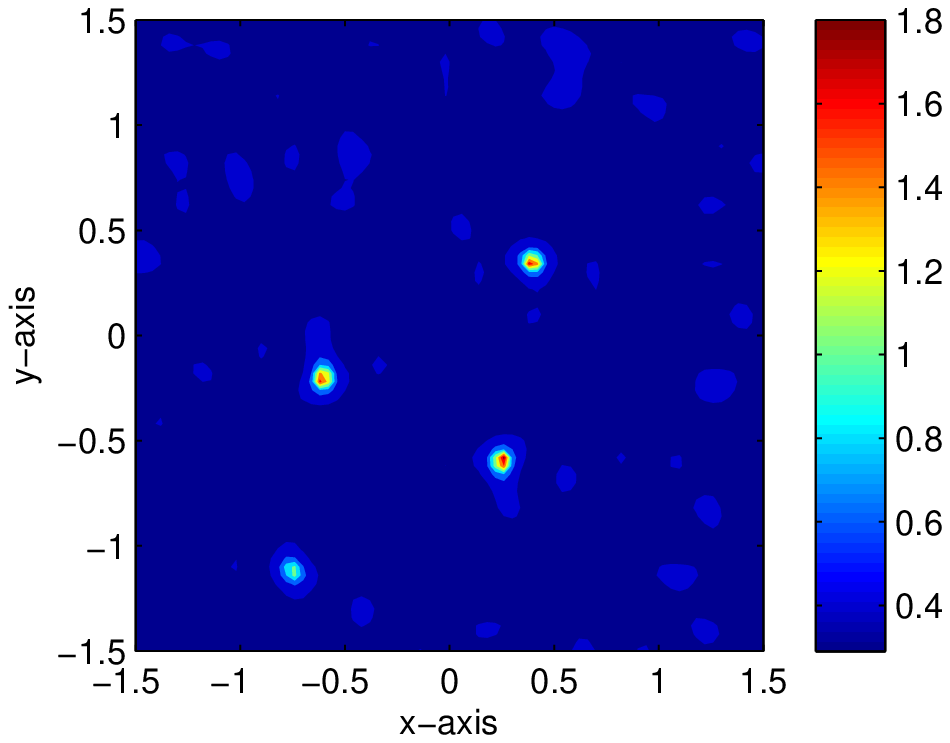}}
\caption{Maps of $\mathbb{E}(\mz;\eta)$ for various value of $\eta$ when true value of $k$ is $2\pi/0.5$.}\label{Result1}
\end{center}
\end{figure}

\begin{figure}
\begin{center}
\subfloat[$\eta=10$]{\label{Result2-1}\includegraphics[width=0.495\textwidth]{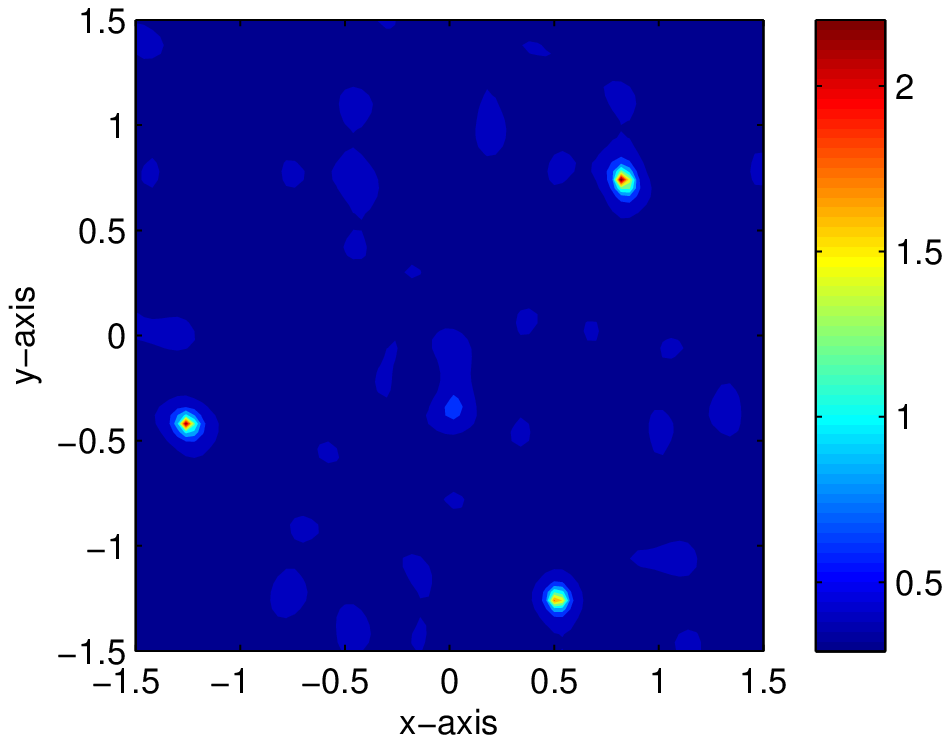}}
\subfloat[$\eta=15$]{\label{Result2-2}\includegraphics[width=0.495\textwidth]{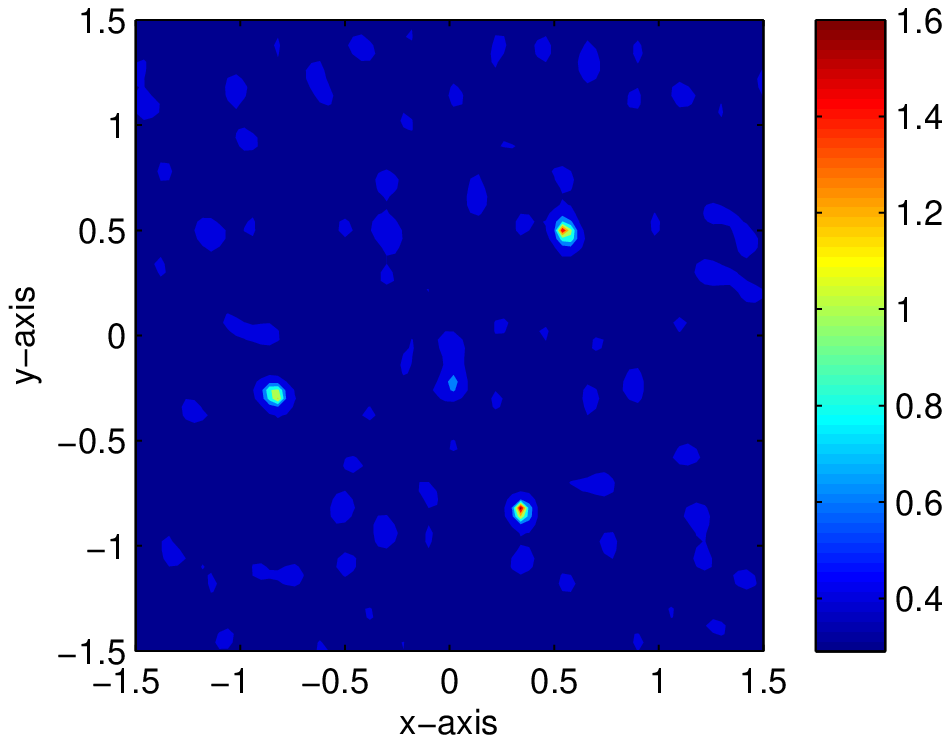}}\\
\subfloat[$\eta=20$]{\label{Result2-3}\includegraphics[width=0.495\textwidth]{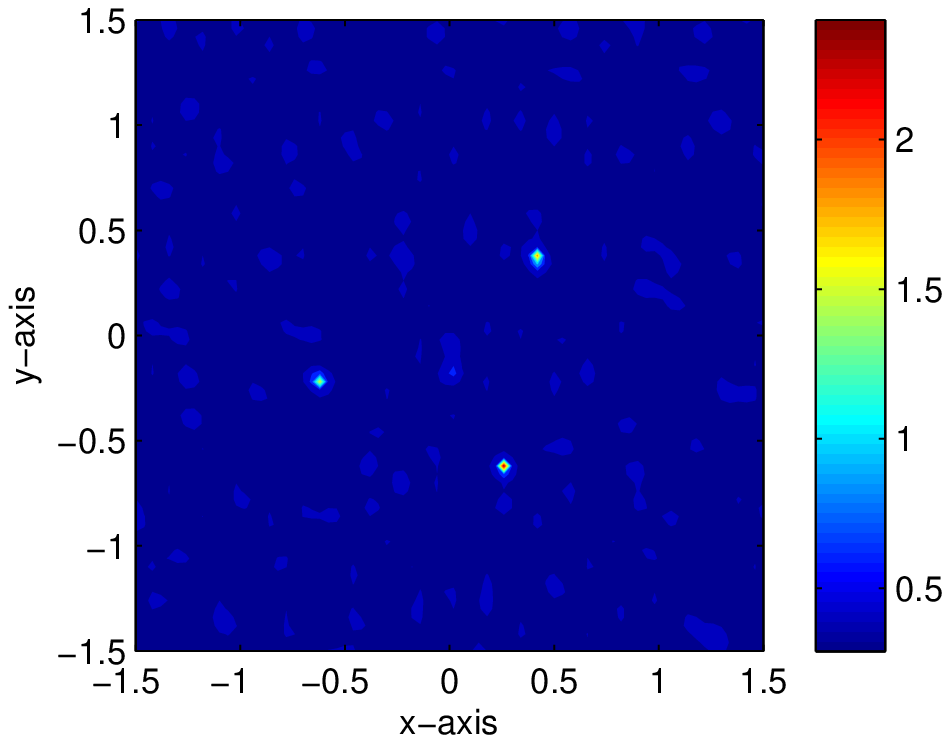}}
\subfloat[$\eta=k$]{\label{Result2-4}\includegraphics[width=0.495\textwidth]{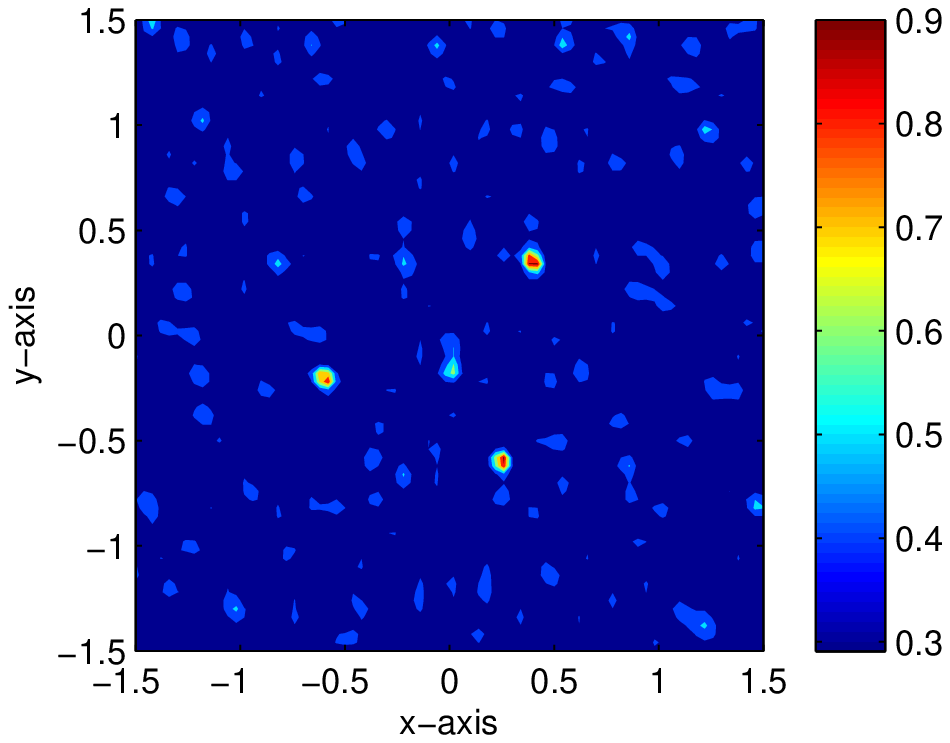}}
\caption{\label{Result2}
Same as Figure \ref{Result1} except true value of $k$ is $2\pi/0.3$.}\end{center}
\end{figure}

Following \cite{PL1,PL3}, MUSIC can be applied to identify the shapes of extended cracks or thin electromagnetic inhomogeneities. Thus, we consider shape identification of the arc-like, extended crack
\[\Gamma_{\mbox{\small ext}}=\set{[s,0.5\cos(0.5s\pi)+0.2\sin(0.5s\pi)-0.1\cos(1.5s\pi)]^T~:~-1\leq s\leq1}.\]
Figure \ref{Result3} shows the distribution of normalized singular values of $\mathbb{K}$ and maps of $\mathbb{E}(\mz;\eta)$ for various values of $\eta$, $N=32$ directions, and $k=2\pi/0.4\approx15.7080$. Based on \cite{PL1}, the first $M=13$ left-singular vectors were selected to define the projection operator in (\ref{Projection}).

Similar to the previous examples, since the value of $\eta$ is different from $k$, we cannot identify the true shape of $\Gamma_{\mbox{\small ext}}$. Note that if $\eta=10$, since $\eta<k$, $\mathbb{E}(\mx;\eta)$ plots large values at $(k/\eta)\mz$, $\mz\in\Gamma_{\mbox{\small ext}}$, i.e., the identified shape of $\Gamma_{\mbox{\small ext}}$ must be larger than the true shape of $\Gamma_{\mbox{\small ext}}$. In contrast, if $\eta=20$ or $25$, since $\eta>k$, the identified shape of $\Gamma_{\mbox{\small ext}}$ must be smaller than the true shape of $\Gamma_{\mbox{\small ext}}$. Since $\eta\approx k$ when $\eta=15$, the identified shape is approximately identical to the true shape of $\Gamma_{\mbox{\small ext}}$; however, the exact shape of $\Gamma_{\mbox{\small ext}}$ is still unknown. Hence, for any value of $\eta\ne k$, the shape of existing crack $\Gamma_{\mbox{\small ext}}$ can be outlined, but identifying its true shape is impossible without exact value of $k$.

\begin{figure}
\begin{center}
\subfloat[distribution of normalized singular values]{\includegraphics[width=0.495\textwidth]{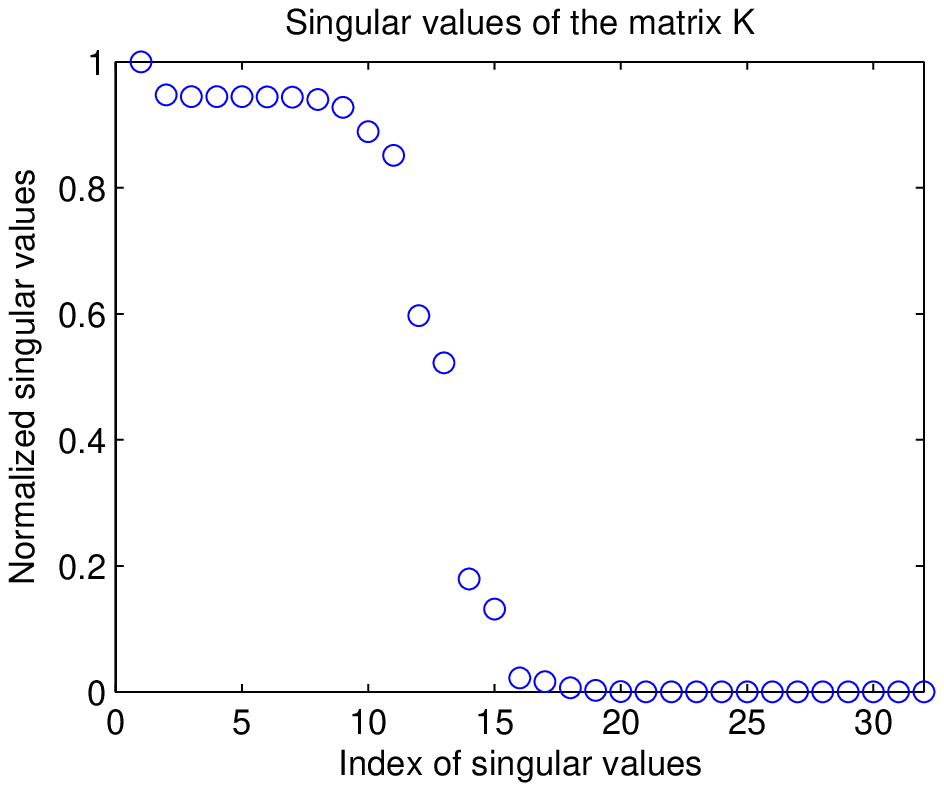}}
\subfloat[$\eta=10$]{\includegraphics[width=0.495\textwidth]{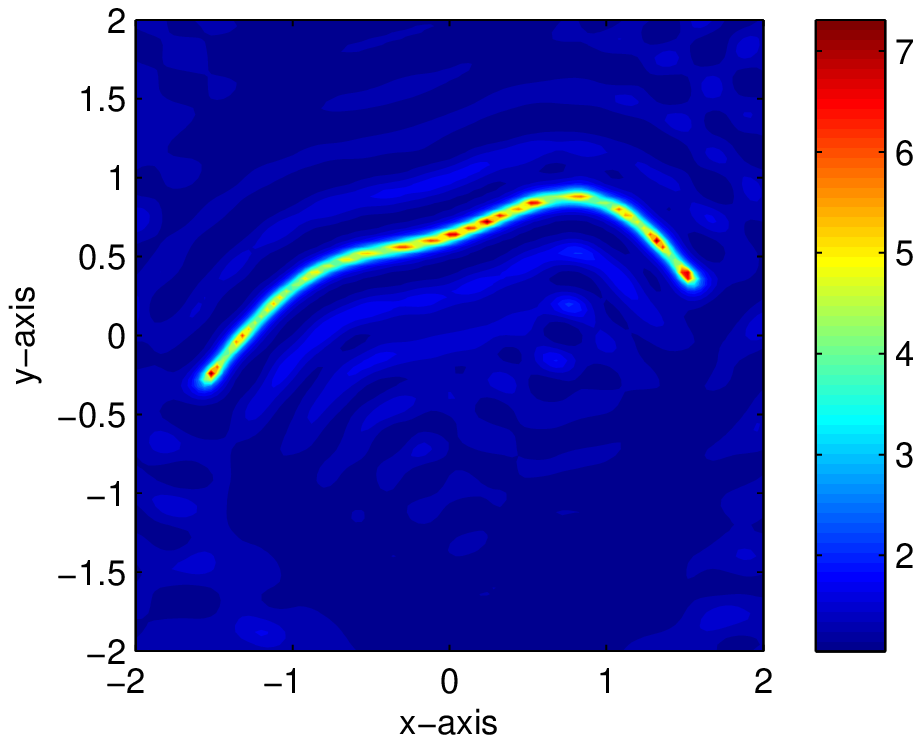}}\\
\subfloat[$\eta=15$]{\includegraphics[width=0.495\textwidth]{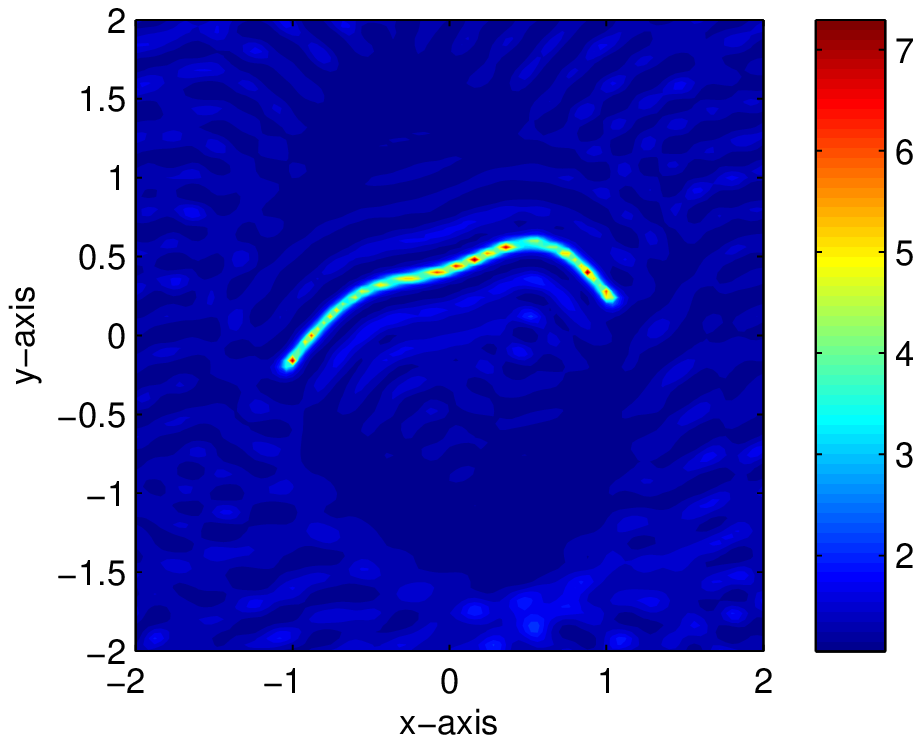}}
\subfloat[$\eta=20$]{\includegraphics[width=0.495\textwidth]{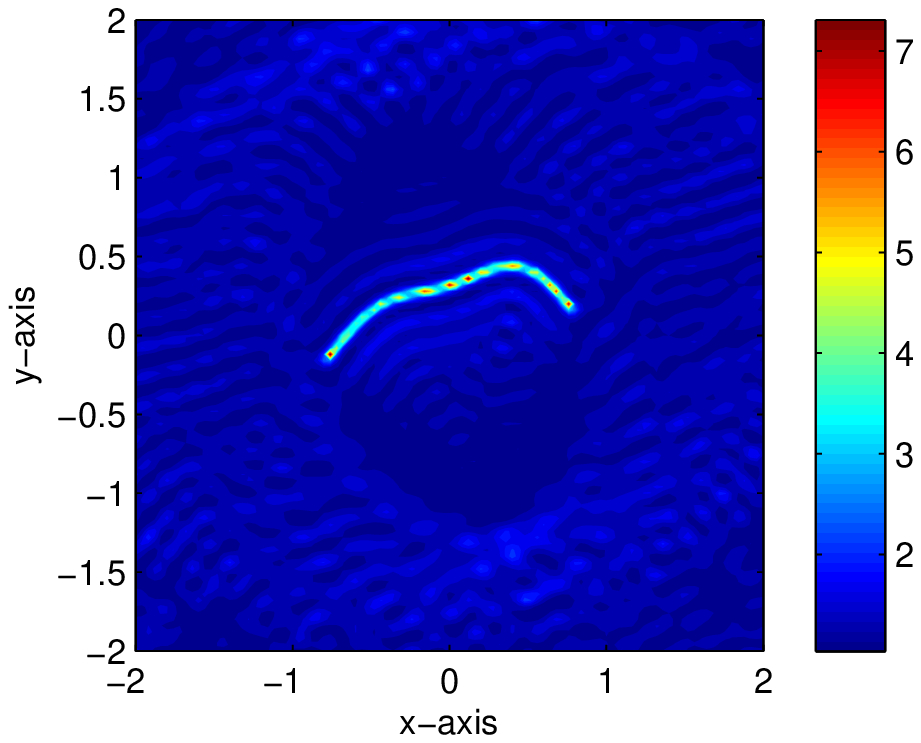}}\\
\subfloat[$\eta=25$]{\includegraphics[width=0.495\textwidth]{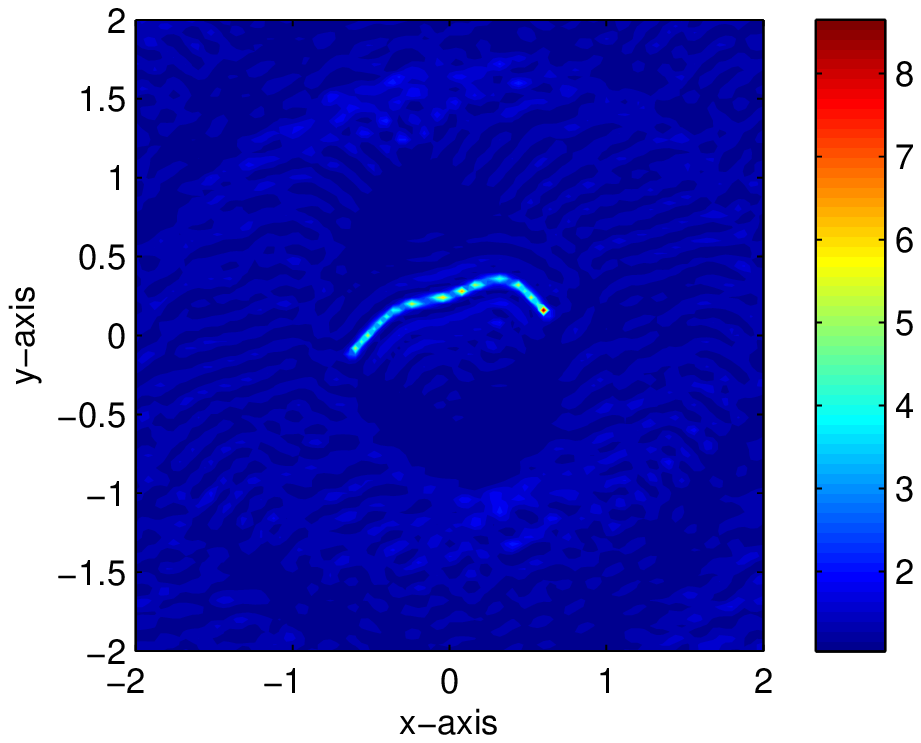}}
\subfloat[$\eta=k$]{\includegraphics[width=0.495\textwidth]{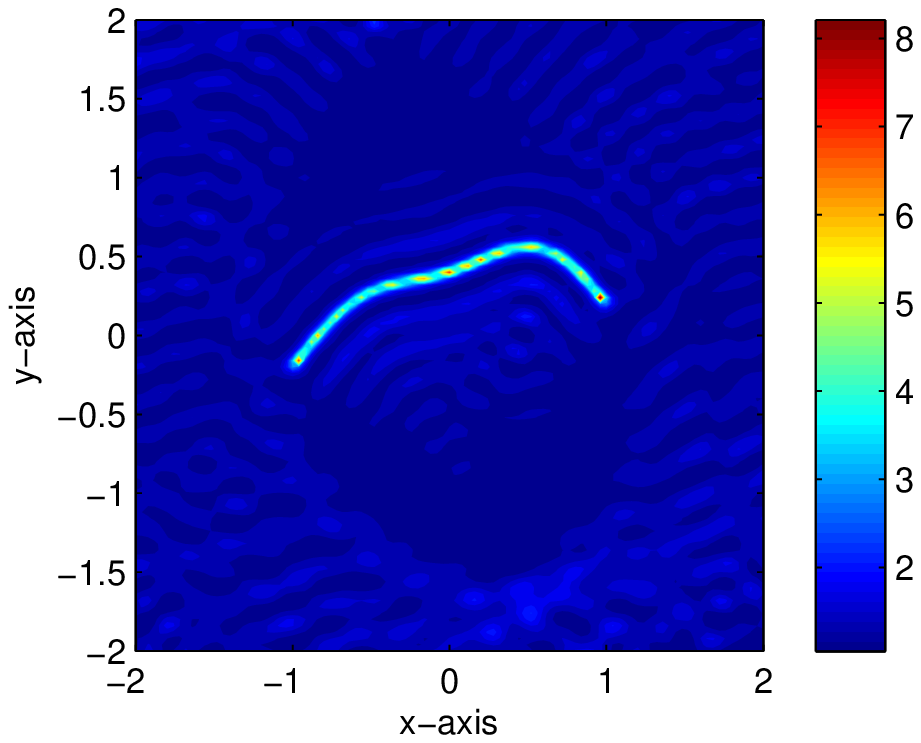}}
\caption{\label{Result3}Distribution of normalized singular values and maps of $\mathbb{E}(\mz;\eta)$ when true value of $k$ is $2\pi/0.4$.}
\end{center}
\end{figure}

On the basis of previous results, we can conclude that if one cannot approximate true value $k$, it is impossible to determine true location/shape of cracks. Hence, let us consider a simple algorithm to estimate the value of $k$ by creating a small scatterer $\Sigma$ located at $\my$. Since, for any positive value $\eta$, we can localize $(k/\eta)\my$ via the map of $\mathbb{E}(\mz;\eta)$ so that we can estimate $k\my$. By using \textit{a priori} information of location $\my$, we can estimate $k$ so that it will correspondingly be possible to identify true shape of $\Gamma_{\mbox{\small ext}}$.

Notice that for creating small scatterer $\Sigma$, we must select location $\my$ such that $\my\notin\Gamma_{\mbox{\small ext}}$. If not, we cannot apply this method. Unfortunately, we have no \textit{a priori} information of $\Gamma_{\mbox{\small ext}}$, creating $\Sigma$ must be considered carefully. Based on the previous result, we have information of $(k/\eta)\mz\in\Gamma_{\mbox{\small ext}}$, i.e., for any value of $\eta$, we can find two end-points, say $(k/\eta)\mz_1$ and $(k/\eta)\mz_2$, where $\mz_1$ and $\mz_2$ are end-points of $\Gamma_{\mbox{\small ext}}$. With this, we can generate two lines $L_1$ and $L_2$ such that
\[(k/\eta)\mz_1\in L_1\quad\mbox{and}\quad (k/\eta)\mz_2\in L_2\quad\mbox{for any }\eta\geq0.\]
This two lines separate $\mathbb{R}^2$ into two non-overlapped domains $\Omega$ and $\mathbb{R}^2\backslash\overline{\Omega}$ such that $(k/\eta)\mz\in\Omega$ for any $\eta\geq0$ and $\mz\in\Gamma$, refer to Figure \ref{Algorithm}. Thus, by creating $\Sigma$ into the $\mathbb{R}^2\backslash\overline{\Omega}$, we can estimate $k$ and correspondingly almost accurate shape of $\Gamma_{\mbox{\small ext}}$.

\begin{figure}
\begin{center}
\subfloat[Identified crack]{\includegraphics[width=0.495\textwidth]{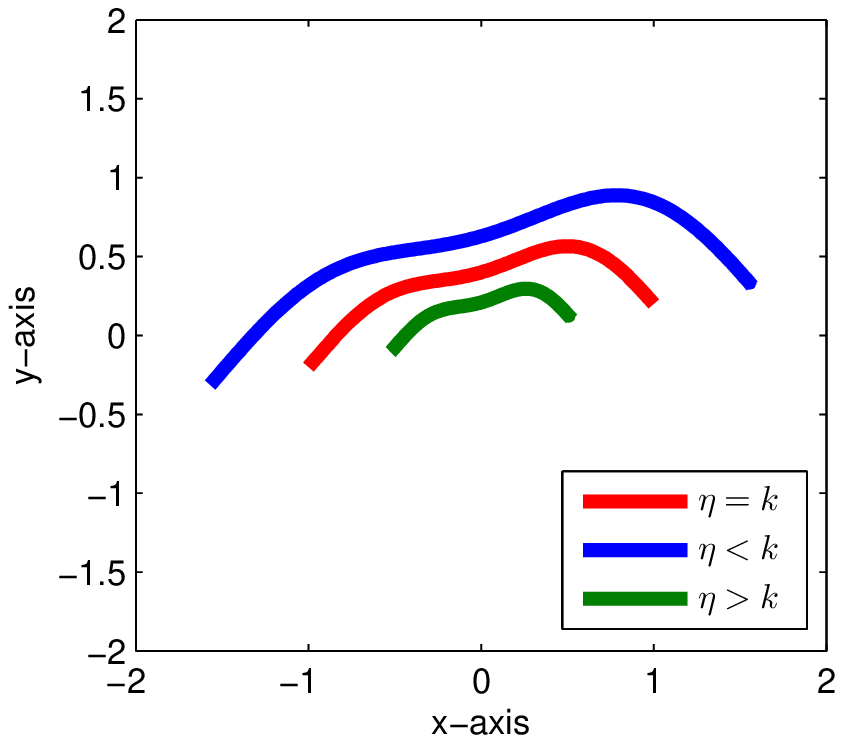}}
\subfloat[Creating small scatterer]{\includegraphics[width=0.495\textwidth]{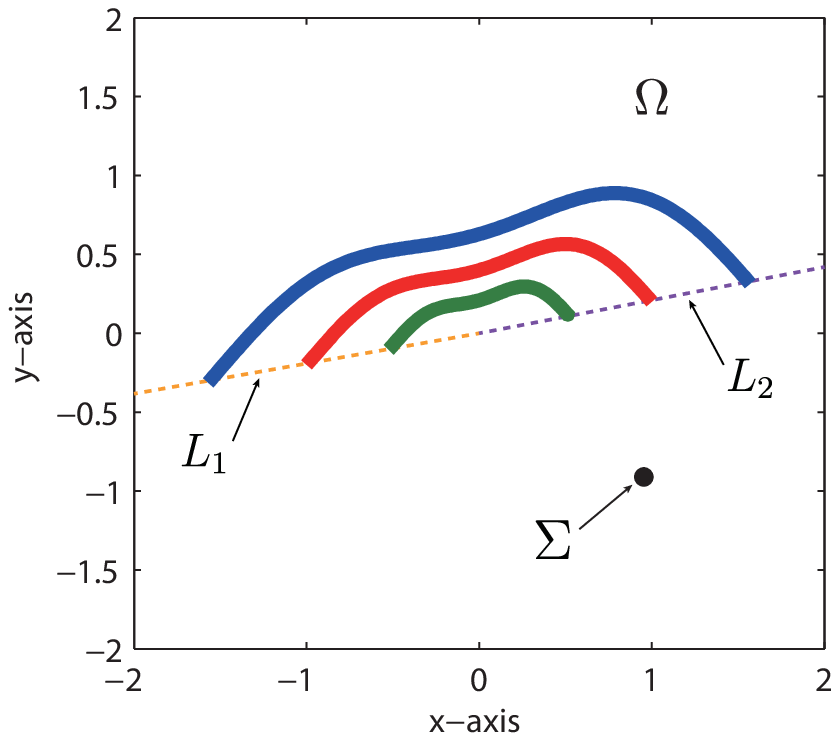}}
\caption{\label{Algorithm}Identified crack and creating small scatterer $\Sigma$.}
\end{center}
\end{figure}

At this stage, we approximate true value of $k$ for a proper imaging of $\Gamma_{\mbox{\small ext}}$. Opposite to the discussed algorithm, due to the unexpected artifacts in the map of $\mathbb{E}(\mz;\eta)$, it is very hard to identify small scatterer $\Sigma$. So, in this example, let us create an extended crack
\[\Sigma:=\set{[s,-1]^T~:~-1\leq s\leq1}.\]
Notice that $\Sigma$ is centered at $[0,-1]^T$ but its location is shifted to $[0,-0.77]^T$ in the map of $\mathbb{E}(\mz;20)$. This means that we can estimate $k$ such that
\[\left(\frac{k}{\eta}\right)\my=[0,-0.77]^T\quad\mbox{implies}\quad k=15.4000\approx15.7080=\frac{2\pi}{0.4}\]
and correspondingly, obtained shape of crack in the map of $\mathbb{E}(\mz;15.4000)$ is very accurate to the $\Gamma_{\mbox{\small ext}}$, refer to Figure \ref{Result4}.

\begin{figure}
\begin{center}
\subfloat[distribution of normalized singular values]{\includegraphics[width=0.495\textwidth]{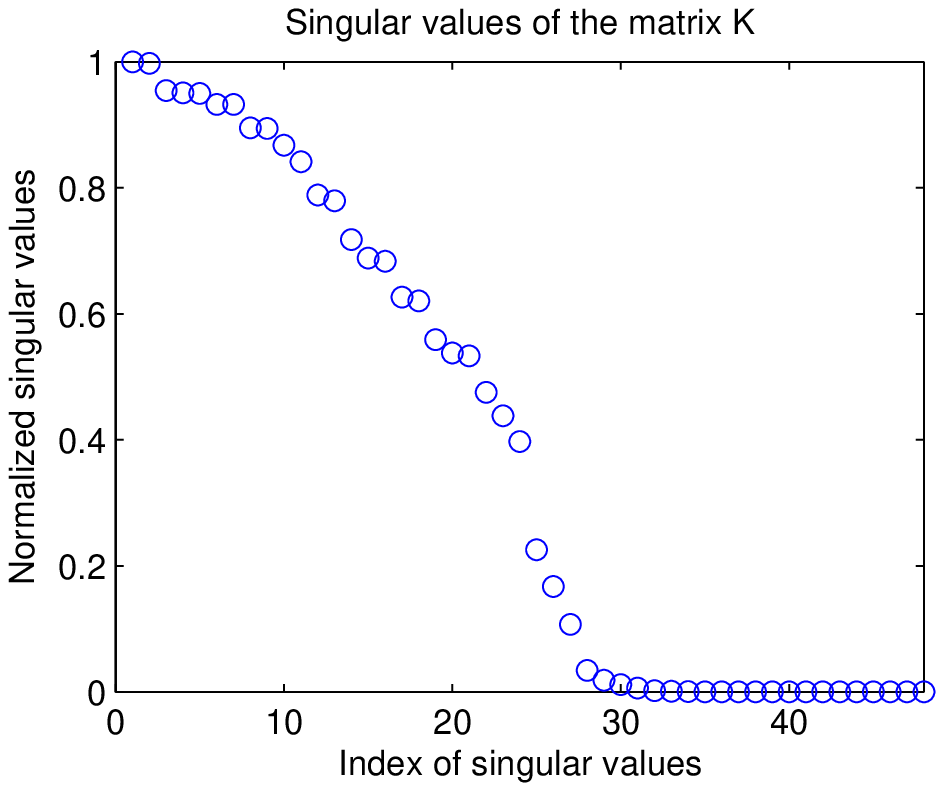}}
\subfloat[$\eta=20$]{\includegraphics[width=0.495\textwidth]{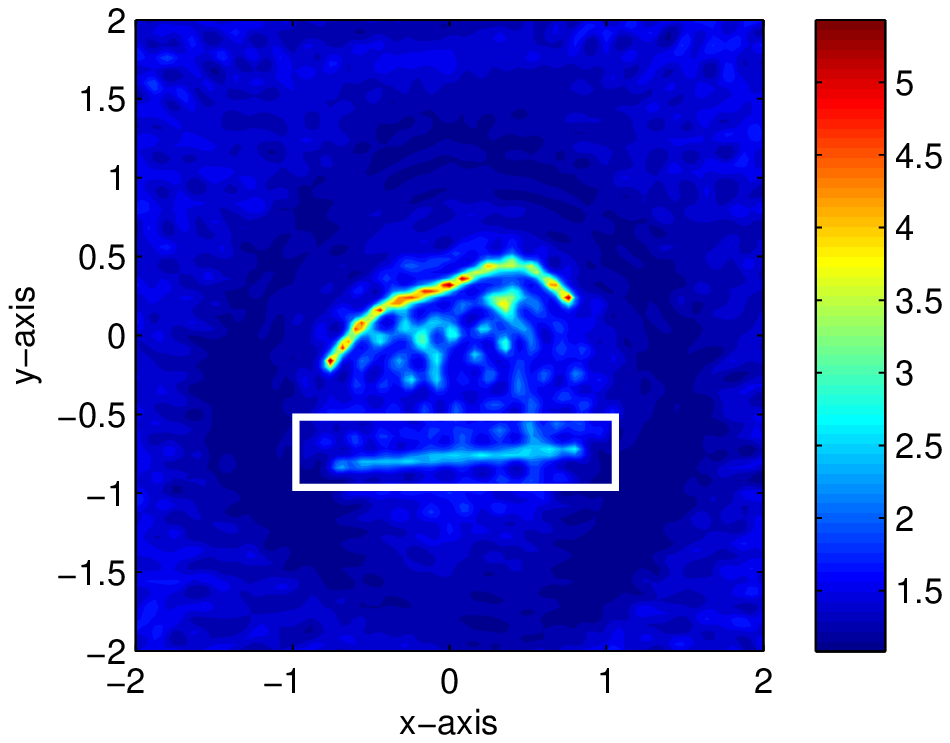}}\\
\subfloat[location $(k/20)\my$]{\label{Result4-3}\includegraphics[width=0.495\textwidth]{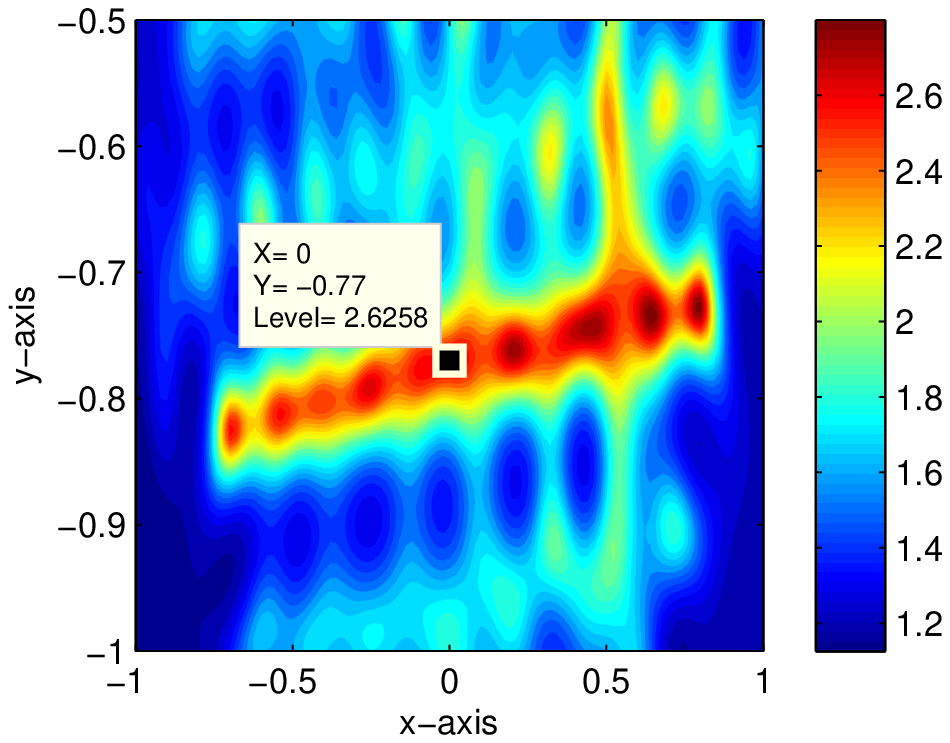}}
\subfloat[$\eta=15.4000$]{\includegraphics[width=0.495\textwidth]{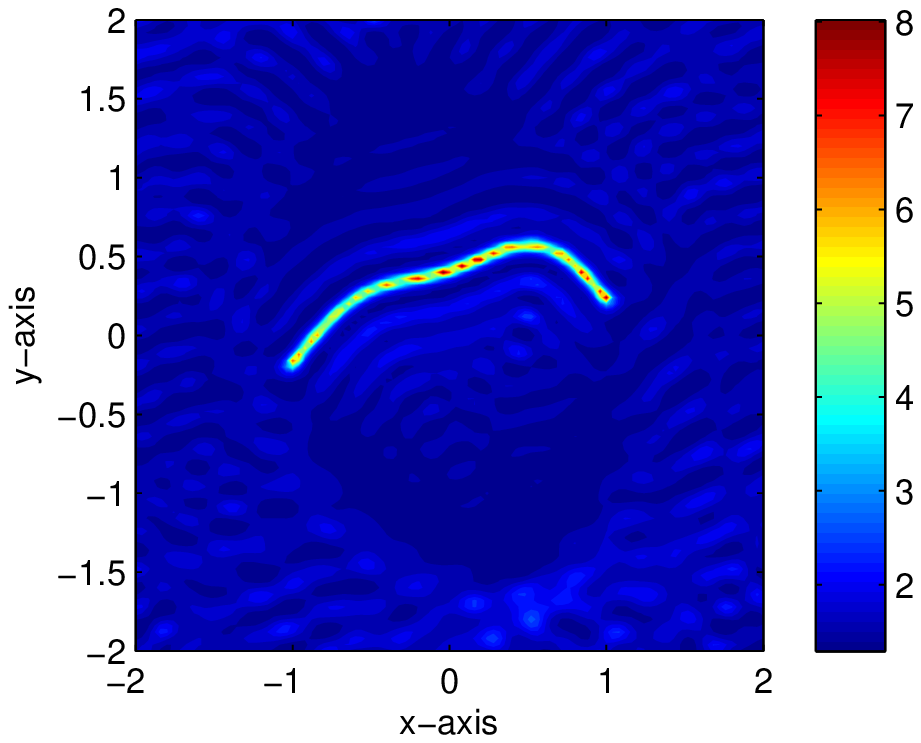}}
\caption{\label{Result4}Approximating true shape of $\Gamma_{\mbox{\small ext}}$.}
\end{center}
\end{figure}

\section{Conclusion}\label{sec5}
Based on the asymptotic expansion formula, the structure of a MUSIC-type imaging functional with an unknown wavenumber is derived by establishing a relationship with a Bessel function of order zero. Based on the identified structure, we determined why inaccurate locations/shapes of cracks were appeared and developed a simple algorithm for finding the exact locations/shapes of cracks by generating small or extended crack on the basis of the tendency of the inaccurate result.

Our current research considers cracks with Dirichlet boundary conditions; we plan to extend our research to cracks with Neumann boundary conditions (sound-hard arc in inverse acoustic scattering problem), refer to \cite{M1,M2}. Furthermore, motivated by \cite{AIL1,CI,KP,PP2,SCC}, extending our research to the limited-view inverse scattering problem will be an interesting research topic.

\bibliographystyle{degruyter-plain}
\bibliography{../../../References}

\providecommand{\bysame}{\leavevmode\hbox to3em{\hrulefill}\thinspace}
\providecommand{\MR}{\relax\ifhmode\unskip\space\fi MR }
\providecommand{\MRhref}[2]{%
  \href{http://www.ams.org/mathscinet-getitem?mr=#1}{#2}
}
\providecommand{\href}[2]{#2}
\begin{thebibliography}{10}

\bibitem{AGKPS}
H.~Ammari, J.~Garnier, H.~Kang, W.-K. Park and K.~S{\o}lna, Imaging schemes for
  perfectly conducting cracks, \emph{SIAM J. Appl. Math.} \textbf{71} (2011),
  68--91.

\bibitem{AIL1}
H.~Ammari, E.~Iakovleva and D.~Lesselier, A {MUSIC} algorithm for locating
  small inclusions buried in a half-space from the scattering amplitude at a
  fixed frequency, \emph{Multiscale Model. Simul.} \textbf{3} (2005), 597--628.

\bibitem{AKLP}
H.~Ammari, H.~Kang, H.~Lee and W.-K. Park, Asymptotic imaging of perfectly
  conducting cracks, \emph{SIAM J. Sci. Comput.} \textbf{32} (2010), 894--922.

\bibitem{C1}
X.~Chen, Multiple signal classification method for detecting point-like
  scatterers embedded in an inhomogeneous background medium, \emph{J. Acoust.
  Soc. Am.} \textbf{127} (2010), 2392--2397.

\bibitem{CZ}
X.~Chen and Y.~Zhong, {MUSIC} electromagnetic imaging with enhanced resolution
  for small inclusions, \emph{Inverse Problems} \textbf{25} (2009), 015008.

\bibitem{C}
M.~Cheney, The linear sampling method and the {MUSIC} algorithm, \emph{Inverse
  Problems} \textbf{17} (2001), 591--595.

\bibitem{CI}
M.~Cheney and D.~Issacson, Inverse problems for a perturbed dissipative
  half-space, \emph{Inverse Problems} \textbf{11} (1995), 865--888.

\bibitem{K1}
A.~Kirsch, The {MUSIC} algorithm and the factorization method in inverse
  scattering theory for inhomogeneous media, \emph{Inverse Problems}
  \textbf{18} (2002), 1025--1040.

\bibitem{K}
R.~Kress, Inverse scattering from an open arc, \emph{Math. Meth. Appl. Sci.}
  \textbf{18} (1995), 267--293.

\bibitem{KP}
Y.~M. Kwon and W.-K. Park, Analysis of subspace migration in limited-view
  inverse scattering problems, \emph{Appl. Math. Lett.} \textbf{26} (2013),
  1107--1113.

\bibitem{M2}
L.~M{\"o}nch, On the numerical solution of the direct scattering problem for an
  open sound-hard arc, \emph{J. Comput. Appl. Math.} \textbf{17} (1996),
  343--356.

\bibitem{M1}
L.~M{\"o}nch, On the inverse acoustic scattering problem by an open arc: the
  sound-hard case, \emph{Inverse Problems} \textbf{13} (1997), 1379--1392.

\bibitem{N}
Z.~T. Nazarchuk, \emph{Singular Integral Equations in Diffraction Theory},
  Mathematics and Applications Series, Karpenko Physicomechanical Institute,
  Ukrainian Academy of Sciences, Lviv, 1994.

\bibitem{P-MUSIC1}
W.-K. Park, Asymptotic properties of {MUSIC}-type imaging in two-dimensional
  inverse scattering from thin electromagnetic inclusions, \emph{SIAM J. Appl.
  Math.} \textbf{75} (2015), 209--228.

\bibitem{P-SUB3}
W.-K. Park, Multi-frequency subspace migration for imaging of perfectly
  conducting, arc-like cracks in full- and limited-view inverse scattering
  problems, \emph{J. Comput. Phys.} \textbf{283} (2015), 52--80.

\bibitem{PL1}
W.-K. Park and D.~Lesselier, Electromagnetic {MUSIC}-type imaging of perfectly
  conducting, arc-like cracks at single frequency, \emph{J. Comput. Phys.}
  \textbf{228} (2009), 8093--8111.

\bibitem{PL3}
W.-K. Park and D.~Lesselier, {MUSIC}-type imaging of a thin penetrable
  inclusion from its far-field multi-static response matrix, \emph{Inverse
  Problems} \textbf{25} (2009), 075002.

\bibitem{PP2}
W.-K. Park and T.~Park, Multi-frequency based direct location search of small
  electromagnetic inhomogeneities embedded in two-layered medium, \emph{Comput.
  Phys. Commun.} \textbf{184} (2013), 1649--1659.

\bibitem{SRACM}
R.~Solimene, G.~Ruvio, A.~Dell'Aversano, A.~Cuccaro, M.~J. Ammann and
  R.~Pierri, Detecting point-like sources of unknown frequency spectra,
  \emph{Prog. Electromagn. Res. B} \textbf{50} (2013), 347--364.

\bibitem{SCC}
R.~Song, R.~Chen and X.~Chen, Imaging three-dimensional anisotropic scatterers
  in multi-layered medium by {MUSIC} method with enhanced resolution, \emph{J.
  Opt. Soc. Am. A} \textbf{29} (2012), 1900--1905.

\bibitem{ZC}
Y.~Zhong and X.~Chen, {MUSIC} imaging and electromagnetic inverse scattering of
  multiple-scattering small anisotropic spheres, \emph{IEEE Trans. Antennas
  Propag.} \textbf{55} (2007), 3542--3549.

\end{thebibliography}
%
%
%
%
%
%
%

\end{document}